\documentclass{amsart}
\usepackage[T1]{fontenc}

\usepackage[foot]{amsaddr}
\usepackage{graphicx}

\usepackage{float}
\usepackage{caption}

\usepackage[x11names]{xcolor}

\usepackage{tikz}
\usetikzlibrary{positioning}
\usetikzlibrary{arrows.meta}
\usepackage{xcolor}

\usepackage{amsmath}
\usepackage{amssymb}
\usepackage{amsthm}
\usepackage{mathtools}
\usepackage{mathrsfs}
\usepackage{stmaryrd}
\usepackage{dsfont}
\usepackage{soul}
\usepackage{cancel}
\usepackage{tcolorbox}
\usepackage{enumitem}
\usepackage[normalem]{ulem}

\definecolor{softblue}{RGB}{30, 60, 180}
\definecolor{softred}{RGB}{180, 30, 30}
\definecolor{softgreen}{RGB}{0, 120, 60}

\usepackage[urlcolor=softred, linkcolor=softblue,citecolor=softgreen, colorlinks=true]{hyperref}

\usepackage{siunitx}

\usepackage{array}
\usepackage{multirow}
\usepackage{tabularx}

\usepackage[T1]{fontenc}
\usepackage[utf8]{inputenc}

\usepackage[numbers,square]{natbib}

\usepackage[margin=1.2in]{geometry}
\usepackage{graphics}

\newtheorem{defin}{Definition}
\newtheorem{prop}{Proposition}
\newtheorem{lemma}{Lemma}
\newtheorem{remark}{Remark}

\newcommand{\R}{\mathbb{R}}

\newcommand{\dt}{\mathrm{d}t}
\newcommand{\dx}{\mathrm{d}x}
\newcommand{\dz}{\mathrm{d}z}
\newcommand{\ind}{\mathds{1}}

\newcommand{\sgn}{\operatorname{sgn}}

\definecolor{myred}{HTML}{eecbcb}
\definecolor{mygreen}{HTML}{cccfc0}

\definecolor{Red}{cmyk}{0,1,1,0.2}

\title[Semi-explicit entropic solutions to a generalized Riemann problem]{Semi-explicit entropic solution to a generalised Riemann problem in some hydrological context}

\author{B. Franke$^{*}$}
\author{M. Lagnaoui$^{*,\dagger}$}
\author{C. Rainer$^{*}$}

\address{$^{*}$ Univ Brest, CNRS UMR 6205,
Laboratoire de Mathématiques de Bretagne
Atlantique}
\email{\href{mailto:majid.lagnaoui@univ-brest.fr}{majid.lagnaoui@univ-brest.fr}}
\thanks{$^{\dagger}$ M. Lagnaoui is the corresponding author. This work was co-funded by Région Bretagne.}

\date{\today}

\begin{document}

\maketitle

\begin{abstract}
We discuss solutions of the one dimensional scalar conservation law with the flux function $y\longmapsto G_{c,\rho}\left(y\right)=((1-\rho)c-y)\mathds{1}_{\{y>c\}}-\rho y\mathds{1}_{\{y\leqslant c\}}$ for two specific initial conditions $u(\cdot,0)=u_0$. This equation arises as the limit of a specific conceptual hydrological model. For initial data strictly below (resp. above) the threshold level $c$, the equation reduces to a constant-speed transport equation with velocity $p$ (resp. $1$). Our goal is to understand precisely what happens when the initial condition crosses the threshold $c$, which corresponds to a generalisation of the Riemann problem, and to provide, in such cases, quasi-closed-form expressions for the corresponding solutions.
\end{abstract}

\vspace{1em}
{\footnotesize\noindent\textbf{Keywords.} Scalar conservation law, entropic solution, conceptual hydrology, semi-explicit solution, generalised Riemann problem
}

\section{Introduction}

\noindent
In this paper, we study a scalar conservation law whose flux function is piecewise affine and continuous, featuring a slope change at a threshold value $c$. 
More precisely we consider the following scalar conservation law: 
\[\partial_t u+\partial_x\left(G_{c,\rho}\left(u\right)\right)=0,\]
with intial condition $ u(x,0)=u_0(x) $ and continuous piecewise linear flux function 
\[G_{c,\rho}\left(y\right)=((1-\rho)c-y)\mathds{1}_{\{y>c\}}-\rho y\mathds{1}_{\{y\leqslant c\}}\]
where $\rho\in\left(0,1\right)$ and $c>0$.
  
This equation can, among other interpretations, be understood in a hydrological context, where $u=u(x,t)$ represents a water-related quantity (for example, the water height), $c>0$ is a saturation threshold (or local storage capacity), and $\rho\in]0,1[$ is a transmissivity coefficient (it controls the fraction of flow under the threshold: a small $\rho$ indicates a very retentive medium (high infiltration/storage), while a large $\rho$ corresponds to a highly transmissive one).

The flux function $G_{c,\rho}$ represents two hydrological regimes:

\begin{itemize}
\item Below capacity ($u\leqslant c$): the lateral flow is proportional to the local storage. It means that only a fraction $\rho$ of the stored water is immediately mobilized downstream (drainage), while the rest $1-\rho$ remains in place (infiltration or slow storage).

\item Above capacity ($c<u$): the medium is saturated. The flux is modeling a saturation-excess mechanism. Any additional water contributes entirely to the outflow, apart from a quantity $(1-\rho)c$ that remains.
\end{itemize}

The solution behavior becomes intricate when the initial condition crosses the threshold $c$. In such cases, standard analysis must be adapted to address the slope change in the flux function. Our main goal is to study the solutions of this equation under two types of initial conditions:

\begin{itemize}
    \item In the first scenario, the initial condition undergoes an up-crossing of the threshold $c$.
    \item In the second scenario, the initial condition undergoes a down-crossing of the threshold $c$.
\end{itemize}

\noindent
In both cases, the initial condition crosses the threshold exactly once, which is a generalisation of the classic Riemann problem \cite{leveque2002finite}.

We note that the first scenario, involving an up-crossing, corresponds to a simpler setting where the main difficulty reduces to a pair of interacting linear advection problems at constant speed. As an outcome one then observes a rarefaction phenomenon. In contrast, the down-crossing case gives rise to more intricate dynamics, leading to the occurence of a shock wave, that require careful analysis. In the second case, our method becomes explicit, once one can solve  an implicit function problem obtained from equating suitable integrals of the initial conditions. In both cases, we obtain an entropy weak solution.

\medskip

Scalar conservation laws in one space dimension are the subject of active research \citep{fjordholm2021wellposednesstheorynonlinearscalar} \citep{Friedrich_2022}, \citep{refId0}, \citep{venkatesh2025trackingscalarconservationlaws}, \citep{amir2025hydrodynamicsrelaxationlimitmultilane}
 notably due to their wide range of applications. These include, for instance : fluid flow in pipelines, traffic flow on roads, blood flow in vessels, irrigation channels,  sedimentation processes, etc\dots

Research in this area typically falls into two main categories. The first, which may be described as qualitative, focuses on foundational aspects such as existence and uniqueness of solutions (in an appropriate sense), contraction principles, maximum principles, and regularity of solutions. This also includes the study of shock wave formation, rarefaction phenomena and their large time asymptotics. One can consult the book of Dafermos for an extended overwiev of the field \citep{Dafermos2000}.
The second, more quantitative in nature, concentrates on numerical schemes, in particular, their convergence to the solution and rates of convergence. 

The strong interest in numerical methods stems from the fact that, in most cases, it is not possible to express the solutions of scalar conservation laws explicitly in terms of the problem parameters.
However, we believe that whenever possible, it is worth attempting to derive explicit or closed-form solutions \citep{clamond2025explicitsolutionhyperbolichomogeneous} (as such results may apply to specific configurations of practical interest that are not covered by general theoretical results).
This is precisely the approach we adopt in this paper.

\section{Context and Problem Statement}
\subsection{Problem Statement}
We introduce the Minimal Reservoir Conservation Equation : \[\partial_t u+\partial_x\left(G_{c,\rho}\left(u\right)\right)=0, \;x\in\mathbb{R},\;t\geqslant 0,\] where for fixed $c>0$ and $0<\rho<1$, the flux function is \[G_{c,\rho}\left(y\right)=((1-\rho)c-y)\mathds{1}_{\{y>c\}}-\rho y\mathds{1}_{\{y\leqslant c\}},\;\text{with}\;c>0\;\text{and}\;\rho\in]0,1[.\]

more precisely, we study the Cauchy problem 
\begin{equation}
\begin{cases}
\partial_t u(x,t)+\partial_x\left(G_{c,\rho}\left(u\right)\right)(x,t)=0,\;x\in\mathbb{R},\;t\geqslant 0,\\
u(x,0)=u_0(x),\;x\in\mathbb{R}.
\end{cases}
\label{pbcauchy}    
\end{equation}

for the two  following cases :\begin{enumerate}[label=(\alph*)]
    \item for some fixed $x_0\in\mathbb{R}$, $\begin{cases}
    c<u_0(x),\;\text{for}\;x< x_0,\\
    u_0(x)\leqslant c,\;\text{for}\;x_0<x,
    \end{cases}$
    \item for some fixed $x_0\in\mathbb{R}$, $\begin{cases}
    u_0(x)< c,\;\text{for}\;x< x_0,\\
    c<u_0(x),\;\text{for}\;x_0<x,
    \end{cases}$
\end{enumerate}
\subsection{Some definitions of the general theory}

We start by recalling the notions of weak solutions and weak entropy solutions (\citep{eymard:hal-02100732},
\citep{gallouet_herbin_2024}, \citep{bressan2000hyperbolic}) :
\begin{defin}

\label{definitionnotionsolution}

We consider the one dimensional conservation law :\begin{equation}
    \begin{cases}
    \partial_t u+\partial_x\left(f(u)\right)=0,\\
    u(\cdot,0)=u_0(\cdot),
    \end{cases}
    \label{lcsgeneral}
\end{equation}
where $f$ is locally Lipschitz and $u_0\in L_{\text{loc}}^1\left(\mathbb{R}\right)$.
Let $u\in L^1_{\text{loc}}\left(\mathbb{R}\times\mathbb{R}_+\right)$ such that $f(u)\in L^1_{\text{loc}}\left(\mathbb{R}\times\mathbb{R}_+\right)$.
We say that :
\begin{enumerate}[label=(\roman*)]
    \item The function $u$ is a weak solution of (\ref{lcsgeneral}) if, for all $\phi\in\mathscr{C}^1_c\left(\mathbb{R}\times\mathbb{R}_+\right)$, \[\int_\mathbb{R}\int_0^\infty u(x,t)\partial_t\phi(x,t)+f(u(x,t))\partial_x\phi(x,t)\mathrm{d}t\;\mathrm{d}x+\int_\mathbb{R}u_0(x)\phi\left(x,0\right)\mathrm{d}x=0.\]
    \item The function $u$ is an entropy weak solution of (\ref{lcsgeneral}) if, for every convex function $\eta:\mathbb{R}\to\mathbb{R}$ (called an entropy function), and associated entropy flux function $\Phi\left(s\right)=\int_0^sf^{\prime}\left(\tau\right)\eta^{\prime}\left(\tau\right)\mathrm{d}\tau\;\left(s\in\mathbb{R}\right)$, the following inequality holds for every $\phi\in\mathscr{C}^1_c\left(\mathbb{R}\times\mathbb{R}_+,\mathbb{R}_+\right)$ :
    \begin{equation}
        \int_0^\infty\int_\mathbb{R}\eta\left(u(x,t)\right)\partial_t \phi(x,t)+\Phi\left(u(x,t)\right)\partial_x\phi(x,t)\mathrm{d}x\;\mathrm{d}t+\int_\mathbb{R}\eta\left(u_0(x)\right)\phi\left(x,0\right)\mathrm{d}x\geqslant0 .
        \label{entropyequation}
    \end{equation}
   
\end{enumerate}
\end{defin}

The following result provides an alternative more computational friendly expression for the entropy weak solution.

\begin{prop}[Kr\v{u}zhkov's entropy condition]
\label{Kruzhkoventropies}
The function $u$ is an entropy solution in the sense of the definition (\ref{definitionnotionsolution}). if and only if, for all $k\in\mathbb{R}$ the equation (\ref{entropyequation}) holds with $\eta(s)=|s-k|$ and $\Phi(s)=\sgn(s-k)(f(s)-f(k))$, i.e:
\[\int_\mathbb{R}\int_0^\infty |u(x,t)-k|\partial_t\phi(x,t)+ \sgn\left(u(x,t)-k\right)\left(f(u(x,t))-f(k)\right)\partial_x\phi(x,t)\big)\mathrm{d}t\;\mathrm{d}x+\int_\mathbb{R} |u_0(x)-k|\phi\left(x,0\right)\mathrm{d}x\geqslant 0\]

The function $\eta$ is called Kr\v{u}zhkov's entropy.
\end{prop}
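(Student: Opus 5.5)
One direction is immediate. The Kružkov entropy $\eta_k(s)=|s-k|$ is convex, and its flux is
\[\Phi_k(s)=\int_0^s f'(\tau)\sgn(\tau-k)\,\mathrm{d}\tau=\sgn(s-k)\bigl(f(s)-f(k)\bigr)+C_k .\]
This computation is valid for locally Lipschitz $f$, since $f'$ exists a.e. and $f$ is absolutely continuous. The constant $C_k=\sgn(k)\bigl(f(k)-f(0)\bigr)$ does not matter: for a constant $C$,
\[\int\!\!\int C\,\partial_x\phi\,\mathrm{d}x\,\mathrm{d}t=0\]
because $\phi$ has compact support. So entropy solutions satisfy the stated Kružkov inequality. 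There is one technicality: Definition \ref{definitionnotionsolution} may be read as asking for smooth $\eta$. In that case I would approximate $|s-k|$ by the smooth convex functions $\sqrt{(s-k)^2+\varepsilon^2}$. Their fluxes converge locally uniformly, and passing to the limit by dominated convergence uses only $u,f(u)\in L^1_{\mathrm{loc}}$.

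For the converse, I plan to show that every convex $\eta$ is, on the relevant range, a limit of combinations of an affine function and nonnegative multiples of Kružkov entropies. Both sides of (\ref{entropyequation}) are linear in the pair $(\eta,\Phi)$, and the inequality is preserved under nonnegative combinations. So it suffices to treat two pieces.

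First, for the affine entropies $\eta(s)=\pm s$ with flux $\Phi=\pm(f-f(0))$, the inequality in both signs is exactly the weak formulation. Weak solvability follows from the Kružkov inequality itself. Take $k$ large, say $k\ge\|u\|_\infty$ on $\operatorname{supp}\phi$ and above the values of $u_0$ there. Then $|u-k|=k-u$ and $\sgn(u-k)(f(u)-f(k))=f(k)-f(u)$, and the constant terms in $k$ integrate to zero (including the contribution from $\int k\,\phi(x,0)\,\mathrm{d}x$, which cancels against $\int\!\!\int k\,\partial_t\phi$). This yields one inequality. Taking $k$ very negative yields the opposite one, and together they give equality. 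This step requires $u$ to be locally bounded; I would either assume $u\in L^\infty$, which is standard in this setting, or truncate.

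Second, a convex $\eta$ on a bounded interval $[a,b]$ containing the range has the representation
\[\eta(s)=\alpha+\beta s+\tfrac12\int_a^b |s-k|\,\mathrm{d}\mu(k),\]
where $\mu=\eta''\ge0$ is a Radon measure. The corresponding fluxes combine the same way, up to constants, which again do not matter. I would then integrate the Kružkov inequality in $k$ against $\mu$ and exchange the integrals by Fubini; the integrand is bounded on compact sets, so this is justified.

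The main obstacle is the boundedness and range issue: to apply the representation, $u$ and $u_0$ must take values in a compact set on $\operatorname{supp}\phi$. I would handle this by working under the $L^\infty$ hypothesis, which holds for the data considered in the paper. Otherwise one can note that only the values of $\eta$ on the essential range matter and argue by truncation.
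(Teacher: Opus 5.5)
Your proof is correct and is the classical argument. The paper gives no proof of this proposition; it only recalls it from the cited references, which follow essentially your route. Letting $k\to\pm\infty$ in the Kru\v{z}kov inequality gives the weak formulation, hence the affine entropies. On the bounded range $[a,b]$ of $u$ and $u_0$ over $\operatorname{supp}\phi$, any convex $\eta$ equals an affine function plus $\frac12\int_{[a,b]}|s-k|\,\mathrm{d}\mu(k)$ with $\mu=\eta''\geqslant 0$. Integrating the Kru\v{z}kov inequalities against $\mu$ (Fubini, with everything bounded) finishes the argument.

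\textbf{Two minor points.}

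First, the constant in the flux of $|s-k|$ is $C_k=-\sgn(k)(f(k)-f(0))$, not $+\sgn(k)(f(k)-f(0))$. This is harmless, since you discard the constant anyway.

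Second, restricting to locally bounded $u$ and $u_0$ is more than a convenience. With $u$ only in $L^1_{\mathrm{loc}}$, as in Definition~\ref{definitionnotionsolution}, $\eta(u)$ and $\Phi(u)$ need not be locally integrable for an arbitrary convex $\eta$. So some boundedness is implicitly needed for the definition to be meaningful. This covers every solution built in the paper, since $v_0,w_0\in L^\infty$. The same applies to your smoothing remark: its dominated-convergence step actually uses local boundedness, not just $u,f(u)\in L^1_{\mathrm{loc}}$. That detour is also unnecessary, because the definition admits non-smooth convex $\eta$, with $\eta'$ taken almost everywhere.
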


We recall the following results :

\begin{prop} Let $u\in L^1_{\text{loc}}\left(\mathbb{R}\times\mathbb{R}_+\right)$ such that $f(u)\in L^1_{\text{loc}}\left(\mathbb{R}\times\mathbb{R}_+\right)$.
\begin{itemize}
    \item If $u$ is an entropy weak solution of (\ref{lcsgeneral}) then $u$ is a weak solution of (\ref{lcsgeneral}).
    \item If $u$ and $v$ are both entropy weak solutions of (\ref{lcsgeneral}) then $u=v$ a.e.
\end{itemize}
\end{prop}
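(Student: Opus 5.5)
The first bullet follows from Proposition~\ref{Kruzhkoventropies}, applied with the Kr\v{u}zhkov entropies for constants $k$ of large absolute value. Fix a test function $\phi\in\mathscr{C}^1_c(\mathbb{R}\times\mathbb{R}_+)$ with $\phi\geqslant 0$. For $k\to-\infty$, or more precisely for $k$ below $u$ pointwise, we have $|u-k|=u-k$ and $\sgn(u-k)(f(u)-f(k))=f(u)-f(k)$. The terms involving the constant $k$ then give
\[
-k\Big(\int\!\!\int\partial_t\phi+\int\phi(\cdot,0)\Big)-f(k)\int\!\!\int\partial_x\phi=0,
\]
because $\int_0^\infty\partial_t\phi\,\mathrm{d}t=-\phi(\cdot,0)$ and $\int_\mathbb{R}\partial_x\phi\,\mathrm{d}x=0$. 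The entropy inequality therefore reduces to the weak formulation holding with ``$\geqslant$''. Since $u$ is only $L^1_{\text{loc}}$ and not bounded, I would avoid taking $k$ below $u$ pointwise. Instead, I would use the convex entropies $\eta(s)=s$ and $\eta(s)=-s$ directly in Definition~\ref{definitionnotionsolution}(ii), which is legitimate since both are convex. Their fluxes are $\Phi=f-f(0)$ and $-(f-f(0))$, and the constant $f(0)$ disappears against $\int\partial_x\phi=0$. The two inequalities together give equality for every nonnegative $\phi$. A general $\phi\in\mathscr{C}^1_c$ is the difference of two nonnegative $\mathscr{C}^1_c$ functions: write $\phi=(\phi+M\psi)-M\psi$, where $\psi\geqslant 0$ is a cutoff equal to $1$ on the support of $\phi$ and $M=\sup|\phi|$. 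By linearity, $u$ is a weak solution.

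The second bullet is Kr\v{u}zhkov's uniqueness theorem, and I would prove it by the doubling of variables method. Take $u(x,t)$, with $k=v(y,s)$ in $u$'s Kr\v{u}zhkov inequality, and $v(y,s)$, with $k=u(x,t)$ in $v$'s inequality. Use a test function $\phi(x,t)\,\delta_\varepsilon\big(\tfrac{x-y}{2}\big)\delta_\varepsilon\big(\tfrac{t-s}{2}\big)$ and add the two inequalities. Letting $\varepsilon\to0$, Lebesgue point arguments give the Kato inequality
\[
\int\!\!\int |u-v|\partial_t\phi+\sgn(u-v)\big(f(u)-f(v)\big)\partial_x\phi\geqslant 0
\]
for all $\phi\geqslant0$, together with the initial-data term. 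Since $u$ and $v$ share the same $u_0$, that term involves $|u_0-u_0|=0$. Next choose $\phi$ as a truncated cone $\chi(t)\,\mathbf 1_{\{|x|\leqslant R+L(T-t)\}}$, suitably smoothed, where $L$ is the Lipschitz constant of $f$ on the relevant range. The flux term is then dominated by the time derivative on the cone boundary. This yields $\int_{|x|<R}|u-v|(x,T)\,\mathrm{d}x\leqslant \int_{|x|<R+LT}|u_0-u_0|=0$, so $u=v$ a.e.

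The main obstacle is the treatment of the initial trace in the doubling argument and the need for a finite propagation speed $L$. In general this requires $u,v\in L^\infty$, or some framework giving a strong $L^1_{\text{loc}}$ trace at $t=0$. For the flux $G_{c,\rho}$ at hand this difficulty disappears, because $G_{c,\rho}$ is globally Lipschitz with constant $\max(1,\rho)=1$. So $L=1$ works uniformly, and I would state the result in that setting, citing \citep{bressan2000hyperbolic,Dafermos2000} for the standard trace details rather than reproducing them.
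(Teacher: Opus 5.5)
The paper gives no proof of this proposition. It is simply recalled, and the second item is Kru\v{z}kov's uniqueness theorem, so your proposal is the standard argument the paper implicitly relies on. Your first item is complete and correct. The functions $\eta(s)=\pm s$ are admissible convex entropies with fluxes $\pm(f-f(0))$, the constant drops out since $\int_\R\partial_x\phi\,\dx=0$, and the splitting $\phi=(\phi+M\psi)-M\psi$ removes the sign constraint on $\phi$.

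For the second item, two remarks.

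First, you are right to narrow the hypotheses. With only $u\in L^1_{\mathrm{loc}}$ and $f$ locally Lipschitz, the statement as written goes beyond Kru\v{z}kov's theorem, which is proved for bounded solutions. Your version (globally Lipschitz flux, which covers $G_{c,\rho}$ with constant $1$, or $L^\infty$ solutions) covers every use the paper makes of the result.

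Second, your claim that the difficulty ``disappears'' for $G_{c,\rho}$ is only half true. Global Lipschitz continuity gives two things: the finite speed $L=1$, and the Lipschitz dependence of $q(a,b)=\sgn(a-b)(f(a)-f(b))$ on $b$ that the interior limit $\varepsilon\to0$ needs. It does not give the initial trace, and the references you cite handle that step only for bounded solutions. For unbounded $L^1_{\mathrm{loc}}$ solutions you can supply it directly. Test the Kru\v{z}kov inequality with a $\mathscr{C}^1$ regularisation of $\psi(x)(1-t/\tau)^+$, $\psi\geqslant0$, to get
\[
\frac1\tau\int_0^\tau\!\!\int_\R|u-k|\,\psi\,\dx\,\dt\leqslant\int_\R|u_0-k|\,\psi\,\dx+\int_0^\tau\!\!\int_\R|q(u,k)|\,|\psi'|\,\dx\,\dt .
\]
The last term vanishes as $\tau\to0$ because $|q(u,k)|\leqslant|f(u)|+|f(k)|\in L^1_{\mathrm{loc}}$. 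Apply this with $k$ ranging over the values of a step-function approximation of $u_0$, with $\psi$ localised on the steps. This yields $\frac1\tau\int_0^\tau\int_\R|u-u_0|\,\psi\,\dx\,\dt\to0$. Combined with the continuity of translations of $u_0$ in $L^1_{\mathrm{loc}}$, this sends both boundary terms of the doubled inequality to $\int_\R|u_0-u_0|\,\phi(\cdot,0)\,\dx=0$. With this step inserted, your uniqueness proof is complete for $G_{c,\rho}$.
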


\subsection{The Constant-Speed Advection Equation}

We end this section of preliminaries by recalling  the well known case of the constant-speed advection equation. It represents the elementary brick of our two-level-equation. For this reason, we detail the results and proofs.

\begin{prop}
For $\rho\geqslant 0$ and $u_0\in L_{\text{loc}}^1\left(\mathbb{R}\right)$ consider the one-dimensional Cauchy transport problem :
\begin{equation}
    \begin{cases}
    \partial_t u-\rho\partial_x u=0,\text{ on }\mathbb{R}\times\mathbb{R}_+^*,\\
    u(x,0)=u_0(x),\text{ }x\in\mathbb{R}.
    \end{cases}
    \label{advectioncauchypb}
\end{equation}
Then the unique weak entropy solution of the problem is given by $(x,t)\mapsto u_0\left(x+\rho t\right)$.
\label{cstspeedadvect}
\end{prop}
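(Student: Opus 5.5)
We will verify directly that $u(x,t):=u_0(x+\rho t)$ satisfies the Kružkov entropy inequalities of Proposition~\ref{Kruzhkoventropies}, with the inequality holding in fact as an equality. Uniqueness then follows from the uniqueness of entropy weak solutions recalled above. Here $f(s)=-\rho s$, so for each $k\in\mathbb{R}$ the entropy flux is $\Phi(s)=\sgn(s-k)(f(s)-f(k))=-\rho|s-k|$. The Kružkov inequality therefore reads
\[
\int_\mathbb{R}\int_0^\infty |u(x,t)-k|\bigl(\partial_t\phi-\rho\,\partial_x\phi\bigr)(x,t)\,\mathrm{d}t\,\mathrm{d}x+\int_\mathbb{R}|u_0(x)-k|\phi(x,0)\,\mathrm{d}x\geqslant 0 .
\]
Setting $w_0:=|u_0-k|\in L^1_{\text{loc}}(\mathbb{R})$, we have $|u(x,t)-k|=w_0(x+\rho t)$. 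It therefore suffices to prove the following identity for every $w_0\in L^1_{\text{loc}}$ and every $\phi\in\mathscr{C}^1_c(\mathbb{R}\times\mathbb{R}_+)$, with no sign condition on $\phi$:
\[
\int_\mathbb{R}\int_0^\infty w_0(x+\rho t)\bigl(\partial_t\phi-\rho\,\partial_x\phi\bigr)\,\mathrm{d}t\,\mathrm{d}x+\int_\mathbb{R}w_0(x)\phi(x,0)\,\mathrm{d}x=0 .
\]
Applying this identity with $w_0=u_0$ also shows directly that $u$ is a weak solution.

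To prove the identity, we change variables $y=x+\rho t$ in the $x$-integral for each fixed $t$. This map is a translation with unit Jacobian. Fubini is justified because $\phi$ has compact support and $w_0$ is locally integrable, so the integrand is integrable on a compact set. Define $\psi(y,t):=\phi(y-\rho t,t)$. Then $\psi\in\mathscr{C}^1_c(\mathbb{R}\times\mathbb{R}_+)$ and $\partial_t\psi(y,t)=(\partial_t\phi-\rho\,\partial_x\phi)(y-\rho t,t)$, so the double integral becomes
\[
\int_\mathbb{R}w_0(y)\int_0^\infty\partial_t\psi(y,t)\,\mathrm{d}t\,\mathrm{d}y.
\]
The inner integral equals $-\psi(y,0)=-\phi(y,0)$ by compact support in $t$. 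This cancels the initial term exactly and gives the identity.

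No step is a genuine obstacle. The only points needing care are the following:
\begin{itemize}
\item justifying Fubini and the change of variables for a merely $L^1_{\text{loc}}$ function $w_0$, which is immediate from compact support;
\item checking that the composition $|u_0-k|$ is still $L^1_{\text{loc}}$ and that $\Phi$ simplifies to $-\rho|s-k|$ because $f$ is linear.
\end{itemize}
The case $\rho=0$ is included, since the change of variables is then the identity.
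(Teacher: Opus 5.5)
Your proof is correct and follows the paper's argument: you change variables to $z=x+\rho t$ with $\psi(z,t)=\phi(z-\rho t,t)$, integrate $\partial_t\psi$ in $t$ so the entropy inequality holds with equality, and take uniqueness from the recalled general result. The only difference is that you test Kru\v{z}kov entropies $|s-k|$ (flux $-\rho|s-k|$) instead of a general convex $\eta$ (flux $-\rho(\eta(s)-\eta(0))$), which removes the extra constant term and automatically keeps $|u_0-k|$ in $L^1_{\text{loc}}$.
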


\begin{proof}
Let $\eta$ be an entropy function. The associated flux $\Phi$ satisfies, for all $s\in\mathbb{R}$, $\Phi(s)= -\rho(\eta(s)-\eta(0))$.

For all test function $\phi\in\mathscr{C}_c^1\left(\mathbb{R}\times\mathbb{R}_+,\mathbb{R}_+\right)$, it holds that
\begin{align*}
&\int_0^\infty\int_\mathbb{R}\eta(u_0(x+\rho t))\partial_t\phi(x,t)+\Phi(u_0(x+\rho t))\partial_x\phi(x,t)\mathrm{d}x\;\mathrm{d}t\\
&=\int_0^\infty\int_\mathbb{R}\eta(u_0(z))\left(\partial_t\phi(z-\rho t,t)-\rho\partial_x\phi(z-\rho t,t)\right)\mathrm{d}z\;\mathrm{d}t-\rho\eta(0)\int_0^\infty\int_\mathbb{R}\partial_x\phi(z-\rho t,t)\mathrm{d}z\;\mathrm{d}t\\
&=-\int_\mathbb{R}\eta(u_0(z))\phi(z,0)\mathrm{d}z,
\end{align*}
where the last line  follows from the classical chain rule. The result follows.
\end{proof}

\section{First Type of Initial Condition : Down crossing}
\label{crossone}

In this chapter, we solve explicitly the transport equation \eqref{pbcauchy} in the easiest of the two cases, namely when, upstream of the flux, the initial condition is first larger than the level $c$ up to a certain point, and after this point it is smaller than $c$.
More precisely, we introduce two functions $v_0,w_0\in L^\infty(\mathbb{R})$ which satisfy, 
\[ \text{ for all $x\in\R$, } 0\leqslant v_0(x)<c \text{ and } w_0(x)>c,\]
 and suppose that there exists some $x_0\in\mathbb R$ such that the initial condition $u_0$ satisfies

\begin{equation}
	\label{initcondun} 
	u_0(x)=w_0(x)\ind_{\{ x\leqslant x_0\}}+v_0(x)\ind_{\{ x> x_0\}}, \; x\in\R.
\end{equation}

In this setting, the following result holds :

\begin{prop}
The function $u$ defined by : 
\begin{equation}
    u(x,t)=w_0(x+t)\mathds{1}_{(-\infty,x_0-t)}(x)+c\mathds{1}_{[x_0-t,x_0-\rho t]}(x)+v_0(x+\rho t)\mathds{1}_{(x_0-\rho t,\infty)}(x),\;(x,t)\in\mathbb{R}\times\mathbb{R}_+
    \label{firsttype}
\end{equation}
is a entropy weak solution of the problem (\ref{pbcauchy}) with the initial condition (\ref{initcondun}).
\end{prop}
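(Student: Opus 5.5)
Our plan is to verify the Kružkov inequality of Proposition \ref{Kruzhkoventropies} directly for every $k\in\R$ and every nonnegative test function $\phi$, by splitting the domain $\R\times\R_+$ into the three regions
\[
A=\{x<x_0-t\},\qquad B=\{x_0-t\leqslant x\leqslant x_0-\rho t\},\qquad C=\{x>x_0-\rho t\}.
\]
On each region the flux is affine in $u$. In $A$ we have $u>c$ and $G_{c,\rho}(u)=(1-\rho)c-u$, so the equation is transport with speed $-1$. In $C$ we have $u<c$ and $G_{c,\rho}(u)=-\rho u$, so the speed is $-\rho$. In $B$ the solution is the constant $c$. We first note that $G_{c,\rho}$ is concave, with slope $-1$ above $c$ and $-\rho$ below. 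The lines $x=x_0-t$ and $x=x_0-\rho t$ are characteristics of the two regimes, so we expect no jump of $u$ across them other than those already present in $w_0$ and $v_0$ at $x_0$. The fan $B$ is a degenerate rarefaction: since $G'$ jumps from $-1$ to $-\rho$ at $c$, the Legendre-type fan consists of the single value $c$.

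The key steps are as follows. For $(x,t)\in A$ set $z=x+t$, and for $(x,t)\in C$ set $z=x+\rho t$. We then rewrite the integrals over $A$ and $C$ exactly as in the proof of Proposition \ref{cstspeedadvect}. The entropy flux $\Phi_k(s)=\sgn(s-k)(G(s)-G(k))$ restricted to $s>c$ (respectively $s<c$) has the form $-\eta_k(s)+\alpha_k$ (respectively $-\rho\eta_k(s)+\beta_k$) only when $k$ lies on the same side of $c$. We therefore treat two cases, $k\geqslant c$ and $k\leqslant c$. In each case $\Phi_k(s)=a\,\eta_k(s)+b$ on each piece, with $a\in\{-1,-\rho\}$ and constants $b$ depending on $k$ and on the piece. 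After the change of variables, the $\eta_k$ parts integrate by the chain rule, $\frac{d}{dt}\phi(z-t,t)=(\partial_t-\partial_x)\phi$. This produces the initial data term $-\int\eta_k(u_0)\phi(\cdot,0)$ restricted to $z<x_0$ (respectively $z>x_0$), plus boundary terms along the lines $x=x_0-t$ and $x=x_0-\rho t$. The constant parts $b\,\partial_x\phi$ integrate in $x$ to boundary terms on the same lines. On $B$ the integrand is the constant pair $(\eta_k(c),\Phi_k(c))$, and the divergence theorem reduces it to boundary terms on the two lines as well.

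Collecting terms, the initial data contributions cancel exactly, because $B$ has zero width at $t=0$. What remains is a sum of line integrals
\[
\int_0^\infty \big(\sigma\,[\eta_k]-[\Phi_k]\big)\,\phi\;\dt
\]
over each line, where $\sigma$ is the line's speed and $[\cdot]$ denotes the jump of the trace across it. The traces are $w_0(x_0)$ versus $c$ on the first line and $c$ versus $v_0(x_0)$ on the second. (Almost-everywhere meaning of these traces is a technicality; the boundary terms actually involve the integrands $\eta_k(w_0(z))$ along $z\to x_0$, which requires care, so we prefer to keep everything in integrated form.)

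The main obstacle is to show that these residual boundary terms have the right sign. The cleaner route is to notice that on the line $x=x_0-t$ both neighbouring states, $w$ with $w>c$ and $c$ itself, lie in the region where $G$ is affine with slope $-1$. Then, for any $k$, the Rankine--Hugoniot relation $[G]=-[u]$ holds. A case check on the position of $k$ relative to the two states shows $\sigma[\eta_k]-[\Phi_k]=0$ when $k$ is outside the interval between the states. When $k$ lies strictly inside, the expression equals $2(G(k)-\text{chord})$, which vanishes because $G$ is affine there. The same argument applies on $x=x_0-\rho t$ with slope $-\rho$. Hence all boundary terms vanish and the Kružkov inequality holds, in fact as an equality. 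To avoid trace issues we would first prove the inequality for smooth $v_0,w_0$ and conclude by $L^1_{\text{loc}}$ density, using that both sides are continuous under this convergence.
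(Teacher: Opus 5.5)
Your argument is correct but organised differently from the paper's proof. The paper works with an arbitrary convex entropy $\eta$, not only Kr\v{u}zhkov's, and never generates line terms. It writes the plateau as $\eta(c)\mathds{1}_{[x_0-t,x_0-\rho t]}(x)=\eta(c)\mathds{1}_{\{x+\rho t\leqslant x_0\}}-\eta(c)\mathds{1}_{\{x+t<x_0\}}$, and likewise for $\Phi(u)$. This turns $(\eta(u),\Phi(u))$ into a superposition of four constant-speed transports (speeds $-1$ and $-\rho$) of the $L^1_{\text{loc}}$ data $\eta(w_0)\mathds{1}_{(-\infty,x_0)}$, $\eta(v_0)\mathds{1}_{(x_0,\infty)}$ and $\pm\eta(c)\mathds{1}_{(-\infty,x_0)}$. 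Proposition~\ref{cstspeedadvect} then gives $\mathscr{E}=0$ in one stroke. Your three-region split, with explicit line integrals and the Rankine--Hugoniot/affine-chord check, reaches the same equality at the cost of more bookkeeping. Its merit is that it shows the mechanism: both fronts are contact discontinuities inside a region where $G_{c,\rho}$ is affine, and the plateau is a degenerate fan, so no entropy is dissipated.

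The trace problem you anticipate does not arise if you perform the change of variables as you announce. The map $z=x+t$ sends $\{x<x_0-t\}$ onto $\{z<x_0\}\times[0,\infty)$, and $z=x+\rho t$ sends $\{x>x_0-\rho t\}$ onto $\{z>x_0\}\times[0,\infty)$. So the $\eta_k(w_0)$ and $\eta_k(v_0)$ parts only produce $t=0$ terms. The line integrals then come solely from constants, with coefficient $\alpha_k-\eta_k(c)-\Phi_k(c)$ on $x=x_0-t$ and $\rho\,\eta_k(c)+\Phi_k(c)-\beta_k$ on $x=x_0-\rho t$. Both vanish because $\Phi_k(c)=-\eta_k(c)+\alpha_k=-\rho\,\eta_k(c)+\beta_k$, with $\alpha_k=(1-\rho)(c-k)^+$ and $\beta_k=-(1-\rho)(k-c)^+$. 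This is just the fact that the state $c$ belongs to both affine pieces of $G_{c,\rho}$, which is the content of your Rankine--Hugoniot argument. Hence the smoothing/density step is harmless but unnecessary.

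A minor point: the affine form $\Phi_k=-\eta_k+\alpha_k$ on $\{s>c\}$ (resp.\ $-\rho\,\eta_k+\beta_k$ on $\{s<c\}$) holds for every $k$, not only for $k$ on the same side of $c$. Only the constants depend on the side.
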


Prior to the proof, we provide a visual representation of the scenario (\ref{firstparamgraphic}).

\begin{figure}[H]
    \centering
\includegraphics[scale=0.8]{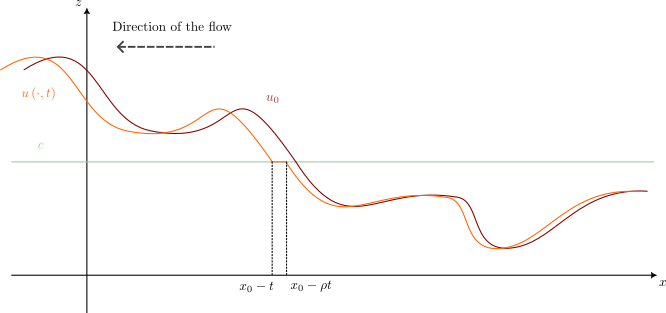}
    \caption{Initial profile and its evolution under the scalar conservation law (Down-Crossing Case). The curve $ u(.,t) $ shows the phenomen of rarefaction on the interval $ ]x_0-t,x_0-\rho t[ $.}
    \label{firstparamgraphic}
\end{figure}

\begin{proof}
Let $\eta$ be an entropy function. The associated flux is given by :\begin{align*}
    \Phi\left(s\right)&=\int_0^s\eta^{\prime}\left(\tau\right)\left(-\rho\mathds{1}_{\tau< c}-\mathds{1}_{c<\tau}\right)\mathrm{d}\tau\\
    &=\begin{cases}
    -\rho\left(\eta(s)-\eta(0)\right)\;\text{if}\;s< c,\\
    -\rho\left(\eta(c)-\eta(0)\right)-\left(\eta(s)-\eta(c)\right)\;\text{if}\;c<s.
    \end{cases}
\end{align*}

We have to prove that  $u$, defined in \eqref{firsttype}, satisfies

\begin{equation}
	\label{entropy1}
\mathscr{E}:=\int_0^\infty\int_\R\big(\eta(u(x,t))\partial_t\phi +\Phi(u)\partial_x\phi\big)\dx\dt +\int_\R \eta(u_0)\phi(\cdot,0)\dx\geqslant0.
\end{equation}
By rewriting $\eta(u)$ and $\Phi(u)$ according to  the expression of $u$ on the different domains, the problem can be split into four distinct constant-speed transport problems.

Applying Proposition \ref{cstspeedadvect} to the transport problem with velocity $(-1)$ (resp. $(-\rho)$, resp. $(-\rho)$, resp. $(-1)$) and initial data $\eta(w_0)\mathds{1}_{(-\infty,x_0)}$ (resp. $\eta(v_0)\mathds{1}_{(x_0,\infty)}$, resp. $\eta(c)\mathds{1}_{(-\infty,x_0)}$, resp. $\eta(c)\mathds{1}_{(-\infty,x_0)}$), we obtain $\mathscr{E}=0$. Thus \eqref{entropy1} is trivially satisfied.
\end{proof}

\section{Second Type of Initial Condition : Up crossing}
\noindent In this section, we analyze how the solution of the equation \eqref{pbcauchy} evolves if, following the direction of the flow, the initial condition is first above the level $c$ and then below. We shall see that it is still possible to compute almost explicitly this solution, but also that this case is much more complex than the previous one.

\subsection{Parametrization of the Initial Condition}
\label{crosstwo}
We consider again the auxiliary functions $v_0$ and $w_0$ which satisfy, for all $x\in\R$, $0\leqslant v_0(x)<c$ and $w_0(x)>c$, and suppose here that $v_0$ and $w_0$ are continuous.\\ 
In this second configuration, in opposition to the first case, we suppose that, for some $x_0\in\mathbb R$  the initial condition $u_0$ satisfies
	\begin{equation}
		\label{initconddeux} 
		u_0(x)=v_0(x)\ind_{\{ x\leqslant x_0\}}+w_0(x)\ind_{\{ x> x_0\}}, \; x\in\R.
\end{equation}

\subsection{Notations and auxilliary functions}

 Here, we propose again an explicit representation of the solution, although it relies on functions defined implicitly through suitable integrals of the intial conditions.

See Figure \ref{twoparamgraphic} for a graphical illustration of our method. We first observe that our specific setting leads the faster left part of the initial profile to catch up with the slower right part. In this process the excess area, which forms between the threshold $ c $ and the profile on the right side, has to match the missing area which forms between the threshold $ c $ and the profile on the left side. This leads to some integral-type constraint relating the position $ x_t $ of the consumption front in the right profile to the location $ y(x_t) $ of the saturation front in the left profile via the difference in propagation speed of the two profiles.
  
\begin{figure}[H]
    \centering
 \includegraphics[scale=0.8]{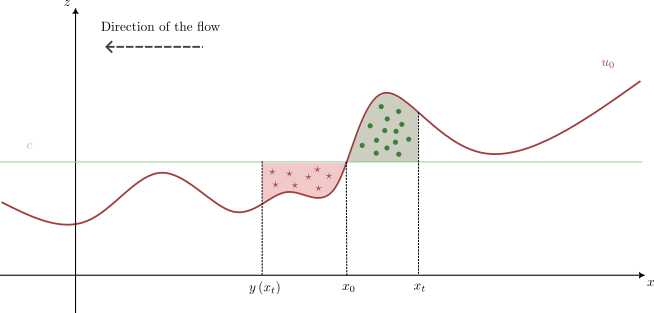}
    \caption{ Illustration of the undderlying idea (Up-Crossing Case).}
  \label{twoparamgraphic}
\end{figure}

We will define two functions $y(\cdot)$ and $x_\cdot$ such that graphically, we have the surface of the red (\textcolor[HTML]{9f3e3e}{$\star$})
 area equals the surface of the green (\textcolor[HTML]{3d7e3b}{$\bullet$})
 area in the graphic (\ref{twoparamgraphic}). 

\subsubsection{Definition of the functions \texorpdfstring{$y(\cdot)$}{y}, \texorpdfstring{$x_\cdot$}{x} and \texorpdfstring{$t(\cdot)$}{t}.}

As a first step, we define some auxiliary functions $I_L$ and $I_R$ by :

\[\begin{array}{rcl}
I_L :[-\infty,x_0] & \longrightarrow & \mathbb{R} \\
y & \longmapsto & \int_y^{x_0}\left(c-v_0\left(z\right)\right)\dz
\end{array}\;\text{and}\;\begin{array}{rcl}
I_R : [x_0,+\infty] & \longrightarrow & \mathbb{R} \\
x & \longmapsto & \int_{x_0}^x\left(w_0\left(z\right)-c\right)\dz.
\end{array}\]

The following result can be easily verified (\citep{leoni2009first}).
 
 \begin{lemma}
 The functions $I_L$ and $I_R$ satisfy the following properties:
 \begin{itemize}
     \item The functions $I_L$ and $I_R$ are both continuously differentiable on their respective domains, and their derivatives are given by : $I_L^{\prime}(y)=v_0\left(y\right)-c\;\text{and}\;I_R^{\prime}(x)=w_0\left(x\right)-c$.
     \item $I_L$ is a strictly decreasing bijection from $(-\infty,x_0]$ onto $[0,I_L(-\infty))$.
     \item $I_R$ is a strictly increasing bijection from $[x_0,\infty)$ onto $[0,I_R(+\infty))$.
 \end{itemize}
 
 \end{lemma}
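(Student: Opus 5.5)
The plan is to prove all three items directly from the definitions of $I_L$ and $I_R$, using that $v_0$ and $w_0$ are continuous and bounded, with $0\leqslant v_0<c<w_0$ everywhere.

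\emph{Differentiability.} Since $v_0,w_0\in L^\infty(\mathbb{R})$ are continuous, the integrands $c-v_0$ and $w_0-c$ are continuous and locally bounded. The integrals defining $I_L(y)$ for finite $y\leqslant x_0$ and $I_R(x)$ for finite $x\geqslant x_0$ are therefore ordinary Riemann integrals over compact intervals. By the fundamental theorem of calculus, $I_R$ is differentiable on $[x_0,\infty)$ with $I_R'(x)=w_0(x)-c$, one-sided at $x_0$. Writing $I_L(y)=-\int_{x_0}^y(c-v_0(z))\,\mathrm{d}z$ gives $I_L'(y)=-(c-v_0(y))=v_0(y)-c$ on $(-\infty,x_0]$. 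Both derivatives are continuous because $v_0$ and $w_0$ are, so both functions are $\mathscr{C}^1$ on the finite parts of their domains.

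\emph{Monotonicity and range.} Pointwise we have $I_L'(y)=v_0(y)-c<0$ and $I_R'(x)=w_0(x)-c>0$. Hence $I_L$ is strictly decreasing and $I_R$ is strictly increasing, which gives injectivity. At the base point, $I_L(x_0)=0$ and $I_R(x_0)=0$. The integrands are nonnegative, so the monotone limits
\[I_L(-\infty)=\lim_{y\to-\infty}I_L(y)\in(0,+\infty] \quad\text{and}\quad I_R(+\infty)=\lim_{x\to+\infty}I_R(x)\in(0,+\infty]\]
exist in the extended sense; this is exactly how the value at $\mp\infty$ in the domain should be interpreted. By continuity and the intermediate value theorem, $I_L$ maps $(-\infty,x_0]$ onto every $[0,a]$ with $a<I_L(-\infty)$. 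Combined with strict monotonicity, this shows it is a bijection onto $[0,I_L(-\infty))$, and the supremum is not attained since a strictly decreasing function cannot reach its limit at $-\infty$. The argument for $I_R$ onto $[0,I_R(+\infty))$ is symmetric.

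\emph{Main subtlety.} The only delicate point is interpretive rather than technical: the domains are written with $\pm\infty$ included, while the bijection statements use the open endpoint. I would handle this by defining $I_L(-\infty)$ and $I_R(+\infty)$ as the monotone limits in $(0,+\infty]$ and restricting the differentiability and bijectivity claims to the finite part of each domain. The remaining steps are routine; should continuity of $v_0$ and $w_0$ be dropped, the derivative formula would only hold almost everywhere by Lebesgue differentiation, which is why the paper cites Leoni.
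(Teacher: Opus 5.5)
Your proof is correct and follows the approach the paper intends. The paper writes out no proof, only remarking that the lemma "can be easily verified" with a citation to Leoni, and your three steps are exactly that routine verification:
\begin{itemize}
  \item the fundamental theorem of calculus for the continuous integrands;
  \item strict monotonicity from the strict sign of the derivative;
  \item the range via the intermediate value theorem together with the monotone limits $I_L(-\infty), I_R(+\infty)\in(0,+\infty]$.
\end{itemize}
Your reading of the endpoints $\pm\infty$ as monotone limits, with differentiability claimed only on the finite part of each domain, is the sensible interpretation of the statement.
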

 
We define the two limiting values
\begin{eqnarray*}
  y_\infty :=  \sup\left\{y<x_0: I_L(y)>I_R(+\infty)\right\} 
    \ \ \ \text{and} \ \ \ 
    x_\infty := \inf\left\{x>x_0: I_R(x)<I_L(-\infty) \right\} ,
\end{eqnarray*}
with the usual the conventions $\sup\emptyset=-\infty$ and $\inf\emptyset=+\infty$.

\begin{remark}
Note that, if $ I_L(-\infty)\leq I_R(+\infty) $, then $ y_\infty=-\infty $, 
	and, if $ I_L(-\infty)\leq I_R(+\infty)) $, then $ x_\infty=+\infty $, so that there are three possibilities : only $x_\infty$ is finite, or only $y_\infty$ is finite, or both are finite.
\end{remark}
 
 The above remark allows us to define the function  $y:[x_0,x_\infty)\rightarrow(y_\infty,x_0]$ as follows:\[y=I_L^{-1}\circ I_R.\]
 
which corresponds, for each $x>x_0$, precisely to the index $y$ on the left of $x_0$ such that, on figure \ref{twoparamgraphic}, the green surface equals the red one.

We define the function $t:[x_0,x_\infty)\rightarrow[0,\infty)$ by the formula \[t(x)=\frac{1}{1-\rho}\left(x-y(x)\right)\]

The following lemma resumes the properties of the functions $y$ and $t$. The proof is elementary, so we omit it.

\begin{lemma} The functions $ y $ and $ t $ defined above have the following properties :
	\begin{enumerate}
		\item 
	The function $y:[x_0,x_\infty)\rightarrow(y_\infty,x_0]$ is strictly decreasing and continuously differentiable, and its derivative is given by :
	
	\begin{equation}
		y^{\prime}(x)=
		-\frac{w_0\left(x\right)-c}{c-v_0\left(y\left(x\right)\right)}.
		\label{y'}
	\end{equation}
	\item The function $ t:[x_0,x_\infty)\rightarrow[0,\infty) $  is  strictly increasing, and continuously differentiable. Its derivative is
	\[ t'(x)= \frac 1{1-\rho}\frac{w_0(x)-v_0(y(x))}{c-v_0(y(x))}.\]
\end{enumerate}
\end{lemma}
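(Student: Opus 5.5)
The plan is to read both statements directly off the preceding lemma on $I_L$ and $I_R$, using the inverse function theorem and the chain rule.

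For the first item, note that on $(-\infty,x_0]$ the derivative $I_L'(y)=v_0(y)-c$ is continuous, because $v_0$ is continuous. It is also strictly negative, because $v_0<c$. Hence $I_L$ is a $\mathscr{C}^1$ strictly decreasing bijection onto its range. By the inverse function theorem, $I_L^{-1}$ is $\mathscr{C}^1$ and strictly decreasing on that range, with $(I_L^{-1})'(s)=1/(v_0(I_L^{-1}(s))-c)$.

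Before composing, we check the domains. For $x\in[x_0,x_\infty)$ we have $I_R(x)<I_L(-\infty)$: this follows from the definition of $x_\infty$ together with the monotonicity of $I_R$. So $I_R(x)$ lies in the range of $I_L$, and $y(x)=I_L^{-1}(I_R(x))$ is well defined. By the definition of $y_\infty$ it lies in $(y_\infty,x_0]$.

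Since $I_R$ is $\mathscr{C}^1$ and strictly increasing, with $I_R'=w_0-c>0$, the composition $y=I_L^{-1}\circ I_R$ is $\mathscr{C}^1$ and strictly decreasing. The chain rule gives
\[
y'(x)=\frac{w_0(x)-c}{v_0(y(x))-c}=-\frac{w_0(x)-c}{c-v_0(y(x))},
\]
which is \eqref{y'}. Continuity of $y'$ follows from the continuity of $w_0$, $v_0$ and $y$, and from the denominator $c-v_0(y(x))$ being positive.

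For the second item, $t(x)=\frac{1}{1-\rho}(x-y(x))$ is $\mathscr{C}^1$ as an affine combination of $\mathscr{C}^1$ functions. Differentiating and putting everything over the common denominator gives
\[
t'(x)=\frac{1}{1-\rho}\left(1+\frac{w_0(x)-c}{c-v_0(y(x))}\right)
=\frac{1}{1-\rho}\,\frac{w_0(x)-v_0(y(x))}{c-v_0(y(x))}.
\]
This quantity is strictly positive, since $w_0>c>v_0$ and $\rho<1$, so $t$ is strictly increasing.

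It remains to check that $t$ maps into $[0,\infty)$. We have $y(x_0)=I_L^{-1}(0)=x_0$, so $t(x_0)=0$, and since $t$ is increasing, $t\geqslant 0$ on $[x_0,x_\infty)$.

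The only delicate point is the endpoint bookkeeping, namely that $I_R([x_0,x_\infty))$ is contained in the range $[0,I_L(-\infty))$ of $I_L$, so that $y$ is defined on the stated domain. This is immediate from the definitions of $x_\infty$ and $y_\infty$ and the strict monotonicity of $I_L$ and $I_R$, so there is no real obstacle.
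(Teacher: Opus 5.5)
Your argument is correct and is the elementary verification the paper has in mind; the paper omits the proof. You invert $I_L$ by the inverse function theorem, apply the chain rule to $y=I_L^{-1}\circ I_R$, then differentiate $t$ and use $w_0>c>v_0$, $\rho<1$ and $t(x_0)=0$. One caveat: your domain check relies on the intended reading $x_\infty=\inf\{x>x_0: I_R(x)>I_L(-\infty)\}$, because the paper's literal definition (with $I_R(x)<I_L(-\infty)$) would give $x_\infty=x_0$; under the intended reading your endpoint bookkeeping is right.
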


Since the functions $y$ and $t$ are bijective, they admit inverse functions. We denote the inverse function of $t$ as
	\[\begin{array}{rrcl}
		x:&[0,\infty)&\rightarrow&[x_0,x_\infty),\\
		&t&\mapsto &x_t.
		\end{array}\]
Further, the inverse function of $ y:[x_0,x_\infty)\rightarrow(y_\infty,x_0] $ will be denoted $ y^{-1}:(y_\infty,x_0]\rightarrow[x_0,x_\infty) $.\\

Now that we have defined  $y(\cdot)$ and $x_\cdot$, we can state a simple yet fundamental Lemma :

\begin{lemma}
The space-time domain is partitioned into the sets $\{x+\rho t< y\left(x_t\right)\}$ and $\{x_t\leqslant x+t\}$. 
\label{noholeslemma}
\end{lemma}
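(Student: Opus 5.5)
We need to show that for every $(x,t)\in\mathbb{R}\times\mathbb{R}_+$ exactly one of the two conditions $x+\rho t<y(x_t)$ and $x_t\leqslant x+t$ holds. Equivalently, the two sets are disjoint and their union is everything. The plan is to reduce both claims to the single identity linking $x_t$, $y(x_t)$ and $t$, namely the definition of $t(\cdot)$:
\[
(1-\rho)\,t = x_t - y(x_t), \qquad\text{i.e.}\qquad y(x_t)+t = x_t+\rho t .
\]
This holds because $x_\cdot$ is the inverse of $t(\cdot)$, so $t(x_t)=t$.

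\emph{Disjointness.} Suppose both $x+\rho t<y(x_t)$ and $x_t\leqslant x+t$ hold. Adding $t-\rho t$ to the first inequality gives $x+t<y(x_t)+(1-\rho)t = x_t$, which contradicts $x_t\leqslant x+t$. So no point lies in both sets.

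\emph{Covering.} Suppose $x_t>x+t$. Subtracting $(1-\rho)t$ from both sides and using the identity above gives
\[
x+\rho t < x_t-(1-\rho)t = y(x_t),
\]
so $(x,t)$ lies in the first set. Hence every point lies in at least one of the two sets.

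Together these show that the two sets form a partition of the space-time domain. In fact the two conditions are complementary, since each is equivalent to the sign of $x+t-x_t$. The boundaries are the lines $x=x_t-t$ and $x=y(x_t)-\rho t$, and they coincide: this single curve is the shock front.

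The only point requiring care is that $x_t$ is well defined for every $t\geqslant 0$. This means that $t:[x_0,x_\infty)\to[0,\infty)$ must be onto, not merely strictly increasing. The previous lemma asserts this bijectivity. If one wanted to check it directly, one would note that $t(x_0)=0$ and that $t(x)\to\infty$ as $x\to x_\infty$:
\begin{itemize}
\item if $x_\infty=+\infty$, then $x-y(x)\geqslant x-x_0\to\infty$;
\item if $x_\infty<\infty$, then $y(x)\to y_\infty=-\infty$, because $I_R(x)\to I_L(-\infty)$ in that case.
\end{itemize}
Continuity then gives surjectivity. This is the main (mild) obstacle; the rest is the one-line algebra above.
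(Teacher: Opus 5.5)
Your proof is correct and follows the paper's argument: both rest on the identity $x_t-y(x_t)=(1-\rho)t$, which comes from $t(x_t)=t$ and turns $x+\rho t<y(x_t)$ into $x+t<x_t$. Your split into disjointness and covering handles the strict versus non-strict inequalities more cleanly than the paper's chain, which as written only shows $\{y(x_t)<x+\rho t\}=\{x_t<x+t\}$; your surjectivity check for $t(\cdot)$ also supplies a detail the paper takes for granted.
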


\begin{proof}
This follows from the following sequence of equivalences :
\[y\left(x_t\right)< x+\rho t  \Leftrightarrow x_t+y\left(x_t\right)-x_t< x+\rho t
     \Leftrightarrow x_t-(1-\rho)t< x+\rho t
     \Leftrightarrow x_t< x+t.\]
\end{proof}

\subsubsection{On the regularity and integrability assumptions for $u_0$}
The continuity of $u_0$ is not, strictly speaking, a necessary condition, and we believe it can be relaxed. Any assumption that ensures both the proper definition (and the required properties) of the present section, as well as the validity of the change of variable $z=y(x)$ in the next section, would be suitable.

As an example, assuming that  $u_0$ is locally integrable and that $\mathcal{L}^1\left(\{u_0=c\}\right)=0$ (with $\mathcal{L}^1$ being the Lebesgue measure on $\R$) allows for a well-defined construction of the functions $y(\cdot),\;t(\cdot)$ and $x_\cdot$ together with all the desired properties. However, under this assumption, we were not able to rigorously justify the change of variable in the following section.

This suggests that continuity may be replaced by weaker assumptions. A deeper analysis would be required to determine the minimal set of conditions under which the change of variable remains valid.

\subsection{Weak entropy solution for the Cauchy problem with Down-crossing initial data}

In this section, we use the auxiliary functions defined below to compute an explicit weak entropy solution to the Cauchy problem \eqref{pbcauchy}.

\begin{prop}
The function $u$ defined by : \begin{equation}
u(x,t)=\mathds{1}_{x+\rho t\leqslant y\left(x_t\right)}v_0(x+\rho t)+\mathds{1}_{x_t\leqslant x+t}w_0(x+t),\;\left(x,t\right)\in\mathbb{R}\times[0,\infty)
    \end{equation} 
is an entropy weak solution of the problem (\ref{pbcauchy}) with the initial condition (\ref{initconddeux}).
\label{implicitsolution}
\end{prop}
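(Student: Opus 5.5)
We verify the entropy inequality \eqref{entropyequation} directly for an arbitrary convex $\eta$, with the same explicit flux $\Phi$ as in the proof for the first type of initial condition:
\[
\Phi(s)=-\rho(\eta(s)-\eta(0))\ \text{ for } s<c,
\qquad
\Phi(s)=-\rho(\eta(c)-\eta(0))-(\eta(s)-\eta(c))\ \text{ for } s>c.
\]
By Lemma \ref{noholeslemma}, the half-plane splits into the left region $L=\{x+\rho t<y(x_t)\}$, where $u=v_0(x+\rho t)<c$, and the right region $R=\{x_t\leqslant x+t\}$, where $u=w_0(x+t)>c$. The interface is the curve $x=s(t):=x_t-t=y(x_t)-\rho t$. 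So $u$ is a single shock separating two transported profiles, with left state $u^-=v_0(y(x_t))$ and right state $u^+=w_0(x_t)$.

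\textbf{Step 1 (reduction to the shock).} In each region, $\eta(u)$ and $\Phi(u)$ are exactly of the transport form of Proposition \ref{cstspeedadvect}, with speed $-\rho$ on $L$ and $-1$ on $R$. The constant offsets in $\Phi$ produce $\partial_x$-integrals of $\phi$. We apply the divergence theorem on each region, or equivalently the change of variables $z=x+\rho t$ on $L$ and $z=x+t$ on $R$ as in Proposition \ref{cstspeedadvect}. The interior contributions cancel, and the $t=0$ boundary terms cancel with $\int\eta(u_0)\phi(\cdot,0)$, since $s(0)=x_0$ and $u_0=v_0$ left of $x_0$, $w_0$ right of $x_0$. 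What remains is a line integral along the curve:
\[
\mathscr E=\int_0^\infty \phi(s(t),t)\Big(s'(t)\big[\eta(u^+)-\eta(u^-)\big]-\big[\Phi(u^+)-\Phi(u^-)\big]\Big)\dt .
\]
This step needs the curve $s$ to be $C^1$. That follows from the lemma on $y$ and $t$: $x_t$ is the inverse of a $C^1$ map with positive derivative, hence $C^1$.

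\textbf{Step 2 (Rankine--Hugoniot).} Taking $\eta=\pm\mathrm{id}$, weak solvability amounts to $s'(t)(u^+-u^-)=G(u^+)-G(u^-)$. The speed follows from $s=x_t-t$ and $x'_t=1/t'(x_t)$ with the formula for $t'$:
\[
s'=(1-\rho)\frac{c-v_0(y)}{w_0-v_0(y)}-1,
\qquad\text{so}\qquad
s'(w_0-v_0)=-\rho(c-v_0)-(w_0-c).
\]
This is exactly $G(w_0)-G(v_0)=(1-\rho)c-w_0+\rho v_0$. Hence the defining area relation $I_L(y)=I_R(x)$ is precisely the Rankine--Hugoniot condition.

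\textbf{Step 3 (entropy sign, the main obstacle).} We need the integrand $s'[\eta(u^+)-\eta(u^-)]-[\Phi(u^+)-\Phi(u^-)]\geqslant 0$ for every convex $\eta$. By Proposition \ref{Kruzhkoventropies} it suffices to take $\eta=|\cdot-k|$, which is the Oleinik/Lax-type check. For $u^-<u^+$ and concave $G$, this is the statement that the chord from $(u^-,G(u^-))$ to $(u^+,G(u^+))$ lies below the graph of $G$ on $[u^-,u^+]$. Since $G$ is concave piecewise linear with slopes $-\rho>-1$, this holds.

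Concretely, we expand the integrand using the piecewise form of $\Phi$. It becomes a combination of $\eta(w_0)$, $\eta(c)$ and $\eta(v_0)$ with weights $(s'+1)$, $(\rho-1)$ and $(\rho-s')$ respectively (total weight zero). We write $c=\lambda v_0+(1-\lambda)w_0$ with $\lambda=(w_0-c)/(w_0-v_0)\in(0,1)$, and use Step 2 to check that the weights are proportional to $\lambda$, $-1$ and $1-\lambda$. Jensen's inequality then gives nonnegativity; it is in fact an equality for affine pieces and strict otherwise. This sign bookkeeping, together with justifying the boundary integration in Step 1 when $x_\infty<\infty$ or $y_\infty>-\infty$ (the curve stays in the domain for all $t$ since $x_t\to x_\infty$ as $t\to\infty$), is where care is needed. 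Since $\phi\geqslant0$, we conclude $\mathscr E\geqslant0$.
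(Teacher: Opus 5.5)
Your proof is correct in substance, but it takes a different route from the paper.

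\textbf{How the two routes differ.} The paper restricts to Kru\v{z}kov entropies $|\cdot-k|$ and splits into the cases $c\le k$ and $k<c$. It then computes the left-region, right-region and initial-data integrals separately, integrating along characteristics. The interface contributions come out in two forms. Some are parametrised by the feet of the characteristics ($z<x_0$ on the left, $z>x_0$ on the right). The other is a $\mathrm{d}t$-integral produced by the constant offset in the flux. All of these are brought to a single variable through $z=y(x)$ and $t'(z)$ before the sign is read off.

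You instead parametrise the interface by time. This collapses everything into one dissipation density $\sigma=s'[\eta]-[\Phi]$ along $x=s(t)$. You then recognise the area identity $I_L(y)=I_R(x)$ as Rankine--Hugoniot, and get the sign for all convex $\eta$ at once by Jensen.

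\textbf{What each route buys.} Your route gives a uniform argument with no case split in $k$, and it exposes the Lax/Oleinik structure of the shock. The paper's route is more pedestrian, but it shows exactly where continuity of $u_0$ is used. Your Step 1 needs that same Fubini-along-characteristics computation, with the substitutions $z=y(x_t)$ and $z=x_t$. A classical divergence theorem will not do, since $\eta(v_0)$ and $\eta(w_0)$ are only continuous.

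The uniformity is a real gain when $c\le k$. There the Kru\v{z}kov flux on the left region is $-\rho|v_0-k|-(1-\rho)(k-c)$. So the offset term is $-(1-\rho)(k-c)\int_0^\infty\phi(s(t),t)\,\mathrm{d}t\le 0$. The paper writes this term with a plus sign and concludes term by term. In fact positivity only follows after the terms are combined. Your computation does exactly that combination, giving $\sigma=(1-\rho)(1-\lambda)(|w_0-k|+w_0-k)\ge 0$.

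\textbf{Two slips to fix in Step 3.}
First, the coefficient of $\eta(v_0)$ is $-(s'+\rho)$, not $\rho-s'$. Only with this coefficient do the weights sum to zero, as you claim they do.

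Second, since $s'+1=(1-\rho)(1-\lambda)$ and $-(s'+\rho)=(1-\rho)\lambda$, the weights on $\eta(w_0),\eta(c),\eta(v_0)$ are proportional to $1-\lambda,\,-1,\,\lambda$. You wrote $\lambda,\,-1,\,1-\lambda$. With $c=\lambda v_0+(1-\lambda)w_0$, the corrected weights are exactly what Jensen needs. They yield $\sigma=(1-\rho)(\lambda\eta(v_0)+(1-\lambda)\eta(w_0)-\eta(c))\ge 0$.
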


\begin{proof}
\label{preuveprincipale}

We will use Kr\v{u}zkov's entropy condition as formulated in Proposition \ref{Kruzhkoventropies}.

Thus we need to prove that, $\text{for all}\;\phi\in\mathscr{C}_c^1\left(\mathbb{R}\times\mathbb{R}_+,\mathbb{R}_+\right),\;\text{and}\;k\in\mathbb{R}$ 
\begin{equation}
	\label{entroeq}
	\int_0^\infty\int_\R|u-k|\partial_t\phi+\sgn(u-k)\left(G_{c,\rho}\left(u\right)-G_{c,\rho}\left(k\right)\right)\partial_x\phi\dx\dt +\int_{\R}|u_0(x)-k|\phi(x,0)\dx\geqslant 0.
	\end{equation}

Thanks to Lemma~\ref{noholeslemma}, we can rewrite this equation as follows:
		\begin{equation}
			\label{AB}
			A+B+C\geqslant 0,
		\end{equation}
		with
	\begin{equation}
		\label{A}
	A = \int_\R\int_0^\infty\mathds{1}_{x+\rho t<y\left(x_t\right)}\big(|u(x,t)-k|\partial_t\phi(x,t)+\sgn(u(x,t)-k)\left(G_{c,\rho}\left(u(x,t)\right)-G_{c,\rho}\left(k\right)\right)\partial_x\phi(x,t)\big)\dt\;\dx\end{equation}
	and
	\begin{equation}
		\label{B}
	B=\int_\R\int_0^\infty\mathds{1}_{x_t\leqslant x+t}\big(|u(x,t)-k|\partial_t\phi(x,t)+\sgn(u(x,t)-k)\left(G_{c,\rho}\left(u(x,t)\right)-G_{c,\rho}\left(k\right)\right)\partial_x\phi(x,t)\big)\dt\;\dx,
	\end{equation}
	and
	\begin{equation}
		\label{C}
	 C=\int_\R|u_0(x)-k|\phi(x,0)\dx.
	\end{equation}
	
To prove \eqref{AB}, we examine separately the two terms $A$ and $B$, and finally combine them to the integral $C$.

We start with the simplest situation, where $y_\infty=-\infty$ and $x_\infty=+\infty$, before examining the two other possibilities, where or $x_\infty$ or $y_\infty$ is finite. For each of theses cases, we consider separately the cases $c\leqslant k$ and $k<c$.\\

\noindent Case 1. $y_\infty=-\infty$ and $x_\infty=+\infty$

1.1. $c\leqslant k$

On the domain of the integral $A$,  we have $u(x,t)=v_0(x+\rho t)\leq  c$ and, for $c\leq k$, 
\[ G_{c,\rho}(u(t,x))-G_{c,\rho}(k)=-\rho v_0(x+\rho t)+k+(1-\rho )c=\rho|v_0(x+\rho t)-k|-(1-\rho)(k-c).\]  
Thus, applying the standard change of variable $\psi_\rho(z,t)=\phi(z-\rho t,t)$, we get

\begin{align}
	\label{Ack}
	 A=&
    \int_\mathbb{R}\int_0^\infty \mathds{1}_{z\leqslant y\left(x_t\right)}\big(|v_0\left(z\right)-k|\partial_t\psi_\rho\left(z,t\right)+(1-\rho)(k-c)\left(\partial_x\phi\right)(z-\rho t,t)\big)\dt\;\mathrm{d}z\nonumber\\
    =&\int_{-\infty}^{x_0}\int_0^{t\left(y^{-1}(z)\right)}|v_0\left(z\right)-k|\partial_t\psi_\rho\left(z,t\right)\mathrm{d}t\;\mathrm{d}z+\int_0^\infty\int_{-\infty}^{y\left(x_t\right)}(1-\rho)(k-c)\left(\partial_x\phi\right)(z-\rho t,t)\mathrm{d}z\; \mathrm{d}t \\
    =&\int_{-\infty}^{x_0}|v_0(z)-k|\psi_\rho\left(z,t\left(y^{-1}\left(z\right)\right)\right)\mathrm{d}z-\int_{-\infty}^{x_0}|v_0\left(z\right)-k|\psi_\rho(z,0)\mathrm{d}z \nonumber
    \\
    &+(1-\rho)(k-c)\int_0^\infty\phi\left(y\left(x_t\right)-\rho t,t\right)\mathrm{d}t
\nonumber
\end{align}

Similarly, on the domain of $B$, we have $u(x,t)=w_0(x+t)>c$ and
	$G_{c,\rho}(u(t,x))-G_{c,\rho}(k)=k-w_0(x+t)$. Therefore, setting $\psi(z,t)=\phi(z-t,t)$, we get

\begin{align*}
    B&=\int_\mathbb{R}\int_0^\infty\mathds{1}_{x_t\leqslant z}|w_0(z)-k|\big(\partial_t\phi(z-t,t)-\partial_x\phi(z-t,t)\big)\dt\;\mathrm{d}z\\
    &=\int_{x_0}^{\infty}\int_0^{t(z)}|w_0(z)-k|\partial_t\psi(z,t)\mathrm{d}t\;\mathrm{d}z\\
    &=\int_{x_0}^{\infty}|w_0(z)-k|\psi(z,t(z))\mathrm{d}z-\int_{x_0}^{\infty}|w_0(z)-k|\phi(z,0)\mathrm{d}z.
\end{align*}

Combining these expressions, we get

\begin{align*}
    A+B+C=&\int_{-\infty}^{x_0}|v_0\left(z\right)-k|\psi_\rho\left(z,t\left(y^{-1}(z)\right)\right)\mathrm{d}z+\int_{x_0}^{\infty}|w_0(z)-k|\psi(z,t(z))\mathrm{d}z\\
    &+(1-\rho)(k-c)\int_0^\infty\phi\left(y\left(x_t\right)-\rho t,t\right)\mathrm{d}t.
\end{align*}

Since each of the terms of the right hand side is non negative, relation \eqref{entroeq} is satisfied.

1.2. $k<c$

This case is more difficult to handle. We shall proceed by similar decompositions as previously, but it needs some additional manipulations to obtain our result.
	
Again, we have to prove \eqref{AB} and start by examining separately the two terms $A$ and $B$ defined in \eqref{A} and \eqref{B}.
On the domain of the integral $A$, we have again $u(x,t)=v_0(x+\rho t)\leq c$ but 
\[ G_{c,\rho}(u(x,t))-G_{c,\rho}(k)=-\rho(v_0(x+\rho t)-k),\]

so that, setting again $\psi_\rho(z,t)=\phi(z-\rho t,t)$ we get

\begin{align}
	\label{**}
A=&\int_{-\infty}^{x_0}|v_0(z)-k|\psi_\rho(z,t\left(y^{-1}(z)\right))\mathrm{d}z-\int_{-\infty}^{x_0}|v_0(z)-k|\phi(z,0)\mathrm{d}z\\
=&-\int_{x_0}^{\infty}|v_0\left(y(x)\right)-k|\phi\left(y(x)-\rho t(x),t(x)\right)y^{\prime}(x)\mathrm{d}x-\int_{-\infty}^{x_0}|v_0(z)-k|\phi(z,0)\mathrm{d}z \nonumber\\
=& \int_{x_0}^{\infty}\frac{|v_0\left(y(x)\right)-k|(w_0(x)-c)}{c-v_0(y(x))}\phi\left(x-t(x), t(x),t(x)\right)\mathrm{d}x-\int_{-\infty}^{x_0}|v_0(z)-k|\phi(z,0)\mathrm{d}z,
\nonumber
\end{align}

where the last line follows from Fubini's theorem and the fact that the domain of the integral can be written as $\{ (z,t)\in\R\times\R, z\leq y(x_t)\}=\{ z\leq x_0, 0\leq t\leq t(y^{-1}(z))\}$ (remark that the function $y$ takes its values in $(-\infty,x_0]$).
\\
Again, on the domain of $B$, $u(x,t)=w_0(x+t)>c$ and, with $k\leq c$, we can write here
	\[ G_{c,\rho}(u(x,t)-G_{c,\rho}(k)=-(w_0(x+t)-(1-\rho)c)-\rho c=-|w_0(x+t)-k|+(1-\rho)(c-k), \]
so that $B$ can be decomposed as $B=B_1+B_2$, with 
\begin{align*}
	B_1
	&=\int_{x_0}^\infty\int_0^{t(z)}|w_0(z)-k|\partial_t\psi(z,t)\mathrm{d}t\;\mathrm{d}z\\
	&=\int_{x_0}^\infty|w_0(z)-k|\phi(z-t(z),t(z))\mathrm{d}z-\int_{x_0}^\infty|w_0(z)-k|\phi(z,0)\mathrm{d}z,\\
\end{align*}
and, using an analogue change of variable and inversion of the integrals as for $A$,
 
\begin{align*}
    B_2=& \int_\R\int_0^{\infty}(1-\rho)(c-k)\mathds{1}_{x_t<x+t}\partial_x\phi(x,t)\dt\;\dx\\
    =&-(1-\rho)(c-k)\int_0^\infty\phi\left(x_t-t,t\right)\dt\\
     =&-(1-\rho)(c-k)\int_{x_0}^\infty\phi\left(z-t(z),t(z)\right)t^{\prime}(z)\dz.\\
\end{align*}

Replacing $t'(z)$ in $B_2$ by its expression we obtain 
\[ B=-\int_{x_0}^\infty\frac{(w_0(z)-c)(v_0(y(z))-k)}{c-v_0(y(z))}\phi(z-t(z),t(z))\mathrm{d}z-\int_{x_0}^\infty|w_0(z)-k|\phi(z,0)\mathrm{d}z,\]
so that
\[ A+B+C=\int_{x_0}^\infty\frac{w_0(z)-c}{c-v_0(y(z))}\big(|v_0(y(z))-k|-
(v_0(y(z))-k)\big)\phi(z-t(z),t(z))\dz. \]
It remains to recall that, by definition, $w_0$ takes its values above $c$ and $v_0$ below $c$, to conclude that \eqref{entroeq} is satisfied.\\

\noindent Case 2. $y>-\infty$ and $x_\infty=+\infty$

In this case, to handle the $A$-term, we have to separate the domain $\{ z<y(x_t)\}$ in two subsets: 
\[ \{ z<y(x_t)\}=\{ z< y_\infty\}\cup\{ y_\infty\leq z< x_0, t<t(y^{-1}(z))\}.\]

In the computations for $c\leqslant k$, arriving at relation \eqref{Ack}, we get then
\begin{align*}
	\int_\mathbb{R}\int_0^\infty \mathds{1}_{z\leqslant y\left(x_t\right)}|v_0\left(z\right)-k|\partial_t\psi_\rho\left(z,t\right)\mathrm{d}t\;\mathrm{d}z
	=&\int_{y_\infty}^{x
		_0}|v_0\left(z\right)-k|\psi_\rho\left(z,t\left(y^{-1}(z)\right)\right)\mathrm{d}z\ -\int_{y_\infty}^{x_0}|v_0(z)-k|\psi_\rho(z,0)\mathrm{d}z\\
	& -\int_{-\infty}^{y_\infty}|v_0\left(z\right)-k|\psi_\rho\left(z,0\right)
	\mathrm{d}z.
	\end{align*}

This leads to
\[A=\int_{y_\infty}^{x_0}|v_0\left(z\right)-k|\psi_\rho\left(z,t\left(y^{-1}(z)\right)\right)\mathrm{d}z-\int_{-\infty}^{x_0}|v_0\left(z\right)-k|\phi(z,0)\mathrm{d}z+(1-\rho)(k-c)\int_0^\infty\phi\left(y\left(x_t\right)-\rho t,t\right)\mathrm{d}t.\]
The terms $B$ and $C$ remaining unchanged, we get finally
\begin{align*}
	A+B+C=&\int_{y_\infty}^{x_0}|v_0(z)-k|\psi_\rho\left(z,t\left(y^{-1}\left(z\right)\right)\right)\mathrm{d}z+\int_{x_0}^\infty|w_0(z)-k|\psi(z,t(z))\mathrm{d}z\\
	&\hspace{1.9in} +(1-\rho)(k-c)\int_0^\infty\phi\left(y\left(x_t\right)-\rho t,t\right)\mathrm{d}t\geqslant 0.
\end{align*}
In the same way, for $k<c$, relation \eqref{**} becomes
\begin{align*}
	A =&\int_{y_\infty}^{x_0}|v_0(z)-k|\psi_\rho\left(z,t\left(y^{-1}\left(z\right)\right)\right)\dz-\int_{y_\infty}^{x_0}|v_0(z)-k|\phi(z,0)\dz-\int_{-\infty}^{y_\infty}|v_0(z)-k|\phi(z,0)\dz\\
	=& -\int_{x_0}^{\infty}\frac{|v_0\left(y(x)\right)-k|(w_0(x)-c)}{v_0(y(x))-c}\phi\left(x-t(x), t(x),t(x)\right)\dx-\int_{-\infty}^{x_0}|v_0(z)-k|\phi(z,0)\dz.	
\end{align*}
Again, there are no changes in the $B$ and $C$-terms, so that the conclusion follows as in the first case.

\noindent Case 3. $y=-\infty$ and $x_\infty<+\infty$\\
The arguments are similar: here we use the decomposition $\{ x_t\leq z\}=\{ x_\infty\leq z\}\cup\{ t\leq t(z),x_0\leq z<x_\infty\}$ and get,
for $c\leq k$,
\begin{align*}
	B&=\int_\R\int_0^\infty\mathds{1}_{x_t\leqslant z}|w_0(z)-k|\;\partial_t\psi(z,t)\dt\;\dz\\
		&=\int_{x_0}^{x_\infty}|w_0(z)-k|\psi(z,t(z))\dz-\int_{x_0}^\infty|w_0(z)-k|\phi(z,0)\dz,
\end{align*}
while $A$ and $C$ remain unchanged, so that 
\begin{align*}
	A+B+C=&\int_{-\infty}^{x_0}|v_0(z)-k|\psi_\rho\left(z,t\left(y^{-1}\left(z\right)\right)\right)\mathrm{d}z+\int_{x_0}^{x_\infty}|w_0(z)-k|\psi(z,t(z))\mathrm{d}z\\
	& \hspace{1.9in}  +(1-\rho)(k-c)\int_0^\infty\phi\left(y\left(x_t\right)-\rho t,t\right)\mathrm{d}t
	\geqslant 0.
\end{align*}
For $k<c$,
\[ A= \int_{x_0}^{x_\infty}\frac{|v_0\left(y(x)\right)-k|(w_0(x)-c)}{c-v_0(y(x))}\phi\left(x-t(x), t(x),t(x)\right)\mathrm{d}x-\int_{-\infty}^{x_0}|v_0(z)-k|\phi(z,0)\mathrm{d}z
\]
and
\[ B=-\int_{x_0}^{x_\infty}\frac{(w_0(z)-c)(v_0(y(z))-k)}{c-v_0(y(z))}\phi(z-t(z),t(z))\mathrm{d}z-\int_{x_0}^\infty|w_0(z)-k|\phi(z,0)\mathrm{d}z,\]
and the conclusion still holds.

\end{proof}

\subsection{Examples}
This subsection is devoted to two examples illustrating the method introduced earlier.

The first one is intended to demonstrate that our approach yields an explicit solution in situations where the standard theory of scalar conservation laws does not provide one.

In the second case, we will recall a Riemann problem, through which we show that our method recovers a well-known fundamental result.

\subsubsection{First example}

We consider the initial condition $u_0(x)=\arctan(x-x_0)+c$.

In this case, we get 
\[\begin{cases}
y(x)=2x_0-x,\\t(x)=\frac{2}{1-\rho}\left(x-x_0\right),\\x_t=\frac{1-\rho}{2}t+x_0,
\end{cases}\]

Therefore :\[u(x,t)=\left(\arctan\left(x+\rho t-x_0\right)+c\right)\mathds{1}_{x+\frac{1+\rho}{2}t\leqslant x_0}+\left(\arctan\left(x+t-x_0\right)+c\right)\mathds{1}_{x_0<x+\frac{1+\rho}{2}t}.\]

\subsubsection{Second example}

Let's consider the initial condition $u_0(x)=u_L\mathds{1}_{(-\infty,x_0]} (x)+u_R\mathds{1}_{(x_0,\infty)}(x)$, consider the Riemann problem : \[\begin{cases}
\partial_t u(x,t)+\partial_t G_{c,\rho}\left(u\right)(x,t)=0,\;  x\in\R, t\geq 0,\\u(x,0)=u_0\left(x\right),\;x\in\R,
\end{cases}\] with $u_L<c<u_R$.
Direct computations yield\[\begin{cases}
y(x)=x_0-\left(x-x_0\right)\frac{u_R-c}{c-u_L},\\t(x)=\frac{1}{1-\rho}\left(x-x_0\right)\frac{u_R-u_L}{c-u_L},\\x_t=x_0+\frac{c-u_L}{u_R-u_L}(1-\rho)t.
\end{cases}\]

Therefore : 

\[u(x,t)=\begin{cases}
u_L,\;\text{if}\;x+\left(1-\frac{c-u_L}{u_R-u_L}(1-\rho)\right)t\leqslant x_0,\\u_R,\;\text{if}\;x_0<x+\left(1-\frac{c-u_L}{u_R-u_L}(1-\rho)\right)t.
\end{cases}\]

 Applying the formula  \[1-\frac{c-u_L}{u_R-u_L}(1-\rho)=-\frac{-u_R+(1-\rho)c-(-\rho)u_L}{u_R-u_L}=-\frac{G_{c,\rho}\left(u_R\right)-G_{c,\rho}\left(u_L\right)}{u_R-u_L},\] 
 we recover the classical expression for the shock propagation speed in hyperbolic PDE's (\citep{leveque2002finite}).

\section{Conclusion}

In this work we established an analytical framework for the Minimal Reservoir Conservation Equation (a one-dimensional scalar conservation law with a piecewise-affine flux that changes slope at a threshold level c). Focusing on two prototypical initial configurations that cross the threshold exactly once, we derived explicit entropy weak solutions and verified their admissibility through Kr\v{u}zhkov's entropy formulation. For the down-crossing profile (initially above c then below), the solution can be written as a superposition of two constant-speed advections plus a plateau at level $c$. For the up-crossing profile (initially below c then above), implicit trajectories defined by the functions $y$, $t$ and $x$ yield a closed-form representation. These constructions provide not only existence but also uniqueness thanks to the entropy framework. 

We illustrated the theory with two examples: a smooth arctangent, and a Riemann-type discontinuity. 

A natural continuation of this work is the study of two-dimensional
analogues in which the transport dynamics depend on whether the solution
lies below or above the threshold. One may consider models in which the
sub-threshold part is advected at a reduced proportion of the velocity
field, or models involving distinct velocity fields acting separately on
each regime. While such extensions do not seem to require major
conceptual changes, they raise substantial technical difficulties: the
analogues of $I_L$, $I_R$, $y$, $x$, and $t$  become significantly more
involved, making both explicit representations and entropy-admissibility
checks much more intricate. From a modeling perspective, such a
two-dimensional framework naturally captures runoff dynamics over a
catchment, where the flow is driven by a slope-induced velocity field
and the threshold represents the transition from subsurface transport to
faster overland flow once soil capacity is exceeded. This extension is
currently under investigation.

\bibliographystyle{plainnat}

\bibliography{biblio1.bib}

\end{document}